\documentclass[12pt,reqno]{amsart}
\usepackage{amsmath,amssymb,amsfonts,amscd,latexsym,amsthm,mathrsfs,verbatim,comment}
\usepackage{enumerate}
\usepackage[unicode]{hyperref}
\textheight24.5cm \textwidth16.5cm \hoffset-2.0cm \voffset-2.0cm
%
\newtheorem{theorem}{Theorem}
\newtheorem{prop}{Proposition}
\theoremstyle{remark}
\newtheorem{remark}{\bf Remark}
\newtheorem*{acknowledgements}{\bf Acknowledgements}
\newcommand{\ba}{\boldsymbol a}
\newcommand{\bt}{\boldsymbol t}
\newcommand{\bs}{\boldsymbol s}

\newcommand{\N}{\bold N}
\newcommand{\cA}{\mathcal A}
\newcommand{\cP}{\mathcal P}
\newcommand\Gal{\operatorname{Gal}}
\newcommand\Frob{\operatorname{Frob}}

\begin{document}

\title[Poor man's transcendence for Frobenius traces]{Poor man's transcendence\\for Frobenius traces of elliptic curves}

\author{Florian Luca}
\address{Stellenbosch University, Mathematics, Merriman Street 7600 Stellenbosch, South Africa; Max Planck Institute for Software Systems, Saarbr\"ucken, Germany; and Department of Computer Science, University of Oxford, UK}
\urladdr{https://florianluca.com/}

\author{Wadim Zudilin}
\address{Department of Mathematics, IMAPP, Radboud University, PO Box 9010, 6500~GL Nij\-me\-gen, Netherlands}
\urladdr{https://www.math.ru.nl/~wzudilin/}

\date{26 July 2025. \emph{Revised}: 3 February 2026}

\dedicatory{To Kaneko-sensei, in admiration and on the birthday occasion}

\subjclass[2020]{11J81 (primary), 11A41, 11J72, 11B83, 11G05, 11G07, 13A35, 16U10 (secondary).}
\keywords{Transcendence, poor man's ad\`ele ring, elliptic curve, Frobenius endomorphism}

\maketitle

\begin{abstract}
Let $E$ be an elliptic curve without complex multiplication defined over $\mathbb Q$.
Viewing the sequence of its Frobenius traces $(a_p(E))_p$ indexed by primes $p$ as an element in the `poor man's ad\`ele ring', we prove its transcendence over~$\mathbb Q$.
\end{abstract}


Many reincarnations of multiple zeta values are known to this date; one of them was introduced recently by Kaneko and Zagier in \cite{KZ25} (see also \cite{Ka19,KMS25}).
This remarkable and innocently looking version called \emph{finite multiple zeta values} lives in the universe of `poor man's ad\`ele ring'
\[
\cA=\bigg(\prod_{p\in\cP}(\mathbb Z/p\mathbb Z)\bigg)\bigg/\bigg(\bigoplus_{p\in\cP}(\mathbb Z/p\mathbb Z)\bigg),
\quad\text{where}\; \cP=\{2,3,5,7,11,13,17,19,23,29,\dots\},
\]
introduced by Kontsevich in~\cite{Ko09} for somewhat different purposes; as the reviewer points out the origin of this ring goes back to the much earlier work of Ax \cite{Ax68}.
The elements of the ring $\cA$ are infinite vectors $\bt=(t_2,t_3,t_5,t_7,\dots)=(t_p)_{p\in\cP}$ indexed by prime numbers, with each $t_p$ viewed as a residue in $\mathbb Z/p\mathbb Z$.
The equivalence relation $\bt\sim\bs$ corresponds to $t_p\equiv s_p\bmod p$ for all sufficiently large primes $p$ (in particular, both $t_p\bmod p$ and $s_p\bmod p$ are well defined for such primes~$p$); a few first components of $\bt,\bs$ can remain undefined.
The field $\mathbb Q$ of rational numbers is naturally embedded in $\cA$ diagonally, so that $\cA$ is regarded as a $\mathbb Q$-algebra via this embedding.

Some very basic open questions about finite multiple zeta values include the nonvanishing of the latter, which in turn motivate investigating irrationality and even transcendence in the ring~$\cA$.
First examples of $\cA$-transcendental numbers are discussed in \cite{AF24,LZ25}.
Though there are two competing concepts of algebraicity in $\cA$\,---\,a na\"\i ve (`combinatorial') one from \cite{AF24} and a truly algebraic one introduced by Rosen in \cite{Ro20} (see also \cite{RTTY24}), we mainly focus our attention only on the latter (see, however, Remark~\ref{anonym} for the complete story).
Roughly speaking, an algebraic element $\bt$ in $\cA$ originates from a $C$-finite sequence $(t_m)_{m\gg1}\in\mathbb Q^\infty$, that is, from a sequence satisfying a linear recursion with constant rational coefficients.
More formally, we have the following criterion to detect the algebraicity.

\begin{prop}[{\cite[Theorem 1.1]{Ro20}}]
\label{thm:Ro}
Let $\bt=(t_p)_{p\in\cP}\in\cA$. The following conditions are equivalent.
\begin{itemize}
\item[(i)] The element $\bt$ is a finite algebraic number.
\item[(ii)] There exists a Galois extension $L/\mathbb Q$ and a map
$\phi\colon\Gal(L/\mathbb Q)\to L$ satisfying
\[
\phi(\sigma \tau\sigma^{-1})=\sigma(\phi(\tau)) \quad\text{for all}\; \sigma,\tau\in\Gal(L/\mathbb Q)
\]
such that
\[
(t_p)_{p\in\cP}=(\phi(\Frob_p)\bmod p)_{p\in\cP},
\]
where $\Frob_p$ is the Frobenius map of $L$ at the prime~$p$.
\end{itemize}
\end{prop}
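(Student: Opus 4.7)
Write $G=\Gal(L/\mathbb Q)$. The plan is to prove the two implications separately, tackling the easier one first. For (ii) $\Rightarrow$ (i) the candidate annihilating polynomial is
\[
P(X)=\prod_{\sigma\in G}\bigl(X-\phi(\sigma)\bigr)\in L[X].
\]
Any $\tau\in G$ acts on the coefficients of $P$ by permuting factors via $\sigma\mapsto\tau\sigma\tau^{-1}$ and converting each $\phi(\sigma)$ into $\phi(\tau\sigma\tau^{-1})$, so $P$ is $G$-invariant and hence lies in $\mathbb Q[X]$. Specialising the equivariance to $\sigma=\tau$ yields $\phi(\sigma)=\sigma(\phi(\sigma))$, so $\phi(\sigma)\in L^{\langle\sigma\rangle}$; for an unramified prime $p$ and $\mathfrak p\mid p$ with $\sigma=\Frob_\mathfrak p$, this places $\phi(\sigma)\bmod\mathfrak p$ in $\mathbb F_p$, and by hypothesis it equals $t_p$. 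Clearing denominators in $P$ (harmless outside a finite set of primes), we conclude $P(t_p)\equiv 0\pmod p$ for all sufficiently large $p$, i.e., $P(\bt)=0$ in $\cA$.

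For (i) $\Rightarrow$ (ii), take a polynomial $P\in\mathbb Q[X]$ of \emph{minimal} degree annihilating $\bt$, let $L$ be its splitting field, and $\alpha_1,\dots,\alpha_n$ its roots. Discard the finitely many primes ramifying in $L$ or dividing any non-zero $\alpha_i-\alpha_j$. For every remaining $p$ and every $\mathfrak p\mid p$, the reduction of $P$ modulo $\mathfrak p$ has $t_p$ as a root; the pairwise separation of the $\alpha_i$ modulo $\mathfrak p$ determines a \emph{unique} index $i(p,\mathfrak p)$ with $t_p\equiv\alpha_{i(p,\mathfrak p)}\pmod{\mathfrak p}$. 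Since $t_p\in\mathbb F_p$ is fixed by the residual Frobenius, the chosen $\alpha_{i(p,\mathfrak p)}$ automatically lies in $L^{\langle\Frob_\mathfrak p\rangle}$, and a short computation replacing $\mathfrak p$ by $\tau\mathfrak p$ yields the Galois-equivariance $\alpha_{i(p,\tau\mathfrak p)}=\tau(\alpha_{i(p,\mathfrak p)})$.

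The main obstacle is the consistency statement that $\alpha_{i(p,\mathfrak p)}$ depends only on $\Frob_\mathfrak p$; once this is known, setting $\phi(\Frob_\mathfrak p):=\alpha_{i(p,\mathfrak p)}$ is unambiguous and the required equivariance is built in by construction. My plan is to leverage minimality of $P$. A first application is immediate: the set $\Omega\subseteq\{\alpha_1,\dots,\alpha_n\}$ of roots arising as $\alpha_{i(p,\mathfrak p)}$ for some $(p,\mathfrak p)$ is $G$-stable by the equivariance above, so $\prod_{\alpha\in\Omega}(X-\alpha)\in\mathbb Q[X]$ still annihilates $\bt$, forcing $\Omega=\{\alpha_1,\dots,\alpha_n\}$. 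A refined version of the same argument, applied Frobenius class by Frobenius class, should rule out two distinct fixed roots $\alpha,\alpha'\in L^{\langle\sigma\rangle}$ both arising at the same $\sigma$: if this happened on infinite subsets of primes with $\Frob_\mathfrak p=\sigma$, then Chebotarev density, combined with a $G$-invariant rearrangement of the factors of $P$ that swaps the surviving roots conjugacy class by conjugacy class, would produce a proper divisor of $P$ in $\mathbb Q[X]$ still annihilating $\bt$, contradicting minimality. The auxiliary condition $\phi(\sigma)\in L^{Z(\sigma)}$ needed for $\phi$ to descend consistently from conjugacy classes to all of $G$ follows by the analogous density argument applied to the centraliser $Z(\sigma)$.
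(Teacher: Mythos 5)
First, a point of comparison: the paper does not prove this proposition at all --- it is quoted from Rosen \cite[Theorem 1.1]{Ro20} --- so there is no in-paper argument to measure yours against, and your proof must stand on its own. It does not, and the failure is located exactly where you place your ``main obstacle''. You never state the definition of \emph{finite algebraic number}, and the definition you implicitly adopt in both directions --- that $\bt$ is annihilated by some monic polynomial in $\mathbb Q[X]$, i.e.\ is integral over the diagonally embedded $\mathbb Q$ --- makes the asserted equivalence \emph{false}, so no argument from that hypothesis can close the gap. Concretely, take any infinite, co-infinite set $S\subseteq\cP$ and set $t_p=1$ for $p\in S$, $t_p=0$ otherwise. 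Then $\bt^2=\bt$, so $X^2-X$ annihilates $\bt$ and is of minimal degree doing so; its splitting field is $\mathbb Q$, the Galois group is trivial, and \emph{both} roots $0$ and $1$ occur for infinitely many primes in the unique (trivial) Frobenius class. There is no proper monic divisor of $X^2-X$ over $\mathbb Q$ annihilating $\bt$, so minimality plus Chebotarev cannot rule out two distinct roots attached to the same $\sigma$; and indeed (ii) genuinely fails for almost all such $\bt$, since subsets of $\cP$ modulo finite symmetric difference form an uncountable family while only countably many elements of $\cA$ arise as $(\phi(\Frob_p)\bmod p)_{p\in\cP}$ for pairs $(L,\phi)$. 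Whatever Rosen's precise definition of finite algebraic number is, this counting shows it must be strictly stronger than integrality over $\mathbb Q$ (it is built from compatible systems of residues over number fields, not from a single annihilating polynomial), and that extra input is exactly what your (i)\,$\Rightarrow$\,(ii) needs and lacks: the splitting field of a minimal annihilating polynomial is in general not the right field $L$, as the example (where that splitting field is $\mathbb Q$ itself) makes plain.

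The ingredients that do work in your write-up are the routine ones. In (ii)\,$\Rightarrow$\,(i), the equivariance does force $\prod_{\sigma\in\Gal(L/\mathbb Q)}(X-\phi(\sigma))$ into $\mathbb Q[X]$, the specialisation $\tau=\sigma$ does give $\phi(\sigma)\in L^{\langle\sigma\rangle}$ and hence $\phi(\Frob_{\mathfrak p})\bmod\mathfrak p\in\mathbb F_p$ for large unramified $p$, and so $\bt$ is integral over $\mathbb Q$ in $\cA$ --- but by the above this is only a necessary condition for (i), not (i) itself. Likewise your observations in (i)\,$\Rightarrow$\,(ii) (uniqueness of the residually matching root after discarding primes dividing the norms $\N(\alpha_i-\alpha_j)$, the fixedness $\Frob_{\mathfrak p}(\alpha_{i(p,\mathfrak p)})=\alpha_{i(p,\mathfrak p)}$, and the relation $\alpha_{i(p,\tau\mathfrak p)}=\tau(\alpha_{i(p,\mathfrak p)})$) are correct and would yield the equivariant $\phi$ \emph{if} consistency held; but consistency is precisely the content of the theorem, it is not a consequence of minimality, and the argument must instead start from Rosen's actual definition. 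As written, the proposal proves a different (and, in one direction, false) statement.
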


As we have already indicated in \cite{LZ25}, there are somewhat natural elements in $\cA$ for which the transcendence is a consequence of existing difficult conjectures.
One such a family of potentially transcendental examples corresponds to the sequence of Frobenius traces $\ba=(a_p(E))_{p\in\cP}$ attached to an elliptic curve $E$ over~$\mathbb Q$;
we demonstrate the $\cA$-irrationality of these $\ba$ in~\cite{LZ25}.
The principal goal of this note is establish their $\cA$-transcendence unconditionally in the case of elliptic curve without complex multiplication.

\begin{theorem}
\label{thm:main}
Let $E$ be an elliptic curve defined over $\mathbb Q$, without complex multiplication.
Then the sequence of its Frobenius traces $\ba=(a_p(E))_{p\in\cP}$ is $\cA$-transcendental. 
\end{theorem}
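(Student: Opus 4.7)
The plan is to argue by contradiction, combining Proposition~\ref{thm:Ro} with a density-zero phenomenon for Frobenius traces of non-CM elliptic curves. Assume that $\ba$ is $\cA$-algebraic. Then Proposition~\ref{thm:Ro} furnishes a Galois extension $L/\mathbb Q$ with group $G=\Gal(L/\mathbb Q)$ and a map $\phi\colon G\to L$ satisfying $\phi(\sigma\tau\sigma^{-1})=\sigma(\phi(\tau))$, such that $a_p(E)\equiv\phi(\Frob_p)\bmod p$ for all sufficiently large primes~$p$.

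The first step is to focus on the value of $\phi$ at the identity. Setting $\tau=1_G$ in the Galois-equivariance relation gives $\phi(1_G)=\sigma(\phi(1_G))$ for every $\sigma\in G$, so that $c:=\phi(1_G)\in L^G=\mathbb Q$. By the Chebotarev density theorem, the primes~$p$ that split completely in~$L$ form a set of positive density $1/|G|$; for any such~$p$ one has $\Frob_p=1_G$ and therefore $a_p(E)\equiv c\bmod p$. Combining this congruence with the Hasse bound $|a_p(E)|\le 2\sqrt p$ (and with $p$ larger than the denominator of $c$) forces $a_p(E)=c$ for all sufficiently large primes~$p$ of this type. In particular $c\in\mathbb Z$, and the set $\{p\colon a_p(E)=c\}$ has positive density in the primes.

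The second step is to reach a contradiction by invoking Serre's open-image theorem. Since $E$ has no complex multiplication, the mod-$\ell$ Galois representation $\bar\rho_{E,\ell}\colon\Gal(\overline{\mathbb Q}/\mathbb Q)\to GL_2(\mathbb F_\ell)$ is surjective for all sufficiently large primes~$\ell$. Applying the Chebotarev density theorem to $\bar\rho_{E,\ell}$, the density of primes~$p$ with $a_p(E)\equiv c\bmod\ell$ equals the proportion of matrices in $GL_2(\mathbb F_\ell)$ whose trace reduces to $c\bmod\ell$, and a short count shows this proportion is~$O(1/\ell)$. Since $\{p\colon a_p(E)=c\}$ is contained in this set for every~$\ell$, letting $\ell\to\infty$ forces its upper density to be zero, contradicting the previous paragraph and completing the proof.

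The main potential obstacle is a careful unraveling of the reduction $\phi(\Frob_p)\bmod p$ in Rosen's criterion when $\phi(\Frob_p)\in L\setminus\mathbb Q$; by restricting to primes that split completely in~$L$, the argument needs only the case $\phi(\Frob_p)=c\in\mathbb Q$, which is harmless. A secondary issue is pinning down a convenient reference for the density-zero assertion on $\{p\colon a_p(E)=c\}$---which is essentially due to Serre---or simply deriving it on the spot from Serre's open-image theorem as sketched above.
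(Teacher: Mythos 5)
Your argument is correct, but it takes a genuinely different route from the paper's. The paper keeps the entire finite set $\{b_1,\dots,b_k\}=\phi(\Gal(L/\mathbb Q))$ in play: it fixes one auxiliary prime $\ell$ with surjective mod-$\ell$ image and coprime to the nonzero integer $b_i$'s, restricts to the $\gg X/\log X$ primes with $a_p\equiv0\bmod\ell$ and $a_p\ne0$ (the latter condition costing the Kaneko--Elkies bound $O(X^{3/4})$ on supersingular primes), and then derives a contradiction by counting divisors: each nonzero norm $\N(a_p-b_i)$ has size $X^{O(1)}$ and hence only $O(\log X)$ prime factors, and there are only $O(\sqrt X)$ possible values of $a_p$, so at most $O(\sqrt X\log X)$ primes can divide some $\N(a_p-b_i)$ --- too few. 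You instead evaluate $\phi$ only at the identity, observe that $c=\phi(1_G)\in\mathbb Q$ by equivariance, and use the completely split primes of $L$ together with the Hasse bound to upgrade the congruence $a_p\equiv c\bmod p$ to the equality $a_p=c$ on a set of primes of positive density $1/[L:\mathbb Q]$; this is then contradicted by the equidistribution of $a_p\bmod\ell$ (the trace-$c$ matrices form a proportion $O(1/\ell)$ of $GL_2(\mathbb F_\ell)$) upon letting $\ell\to\infty$. All the steps check out: in particular the count of fixed-trace invertible matrices is $\sim\ell^3$ out of $\sim\ell^4$, and the passage from $a_p\equiv c\bmod p$ to $a_p=c$ is sound once $p$ exceeds the denominator of $c$ and $2\sqrt p+|c|<p$. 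Your route avoids both the supersingular-prime input and the norm-counting, at the price of discarding all information about $\phi$ away from the identity; it amounts to the classical fact (due to Serre) that $\{p:a_p(E)=c\}$ has density zero for a non-CM curve. The paper's version is quantitative and, as the authors remark, is the counting template they expect to generalise to other sequences; both arguments ultimately rest on the same two pillars, Rosen's criterion and Serre's open image theorem combined with Chebotarev.
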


\begin{proof}
Assume for a contradiction that $\ba$ is $\cA$-algebraic; in what follows, we abbreviate $a_p(E)$ simply as $a_p$. By part~(ii) of Proposition~\ref{thm:Ro} there exist data $L/\mathbb Q$ and $\phi\colon\Gal(L/\mathbb Q)\to L$; collect the \emph{finite} set $\{b_1,\dots,b_k\}$ of algebraic numbers given by the image of $\Gal(L/\mathbb Q)$ in~$L$.
The congruence
\[
a_p\equiv b_i\bmod p \quad\text{for some}\; i\in\{1,\dots,k\}
\]
means that for all $p$ except finitely many, the prime $p$ divides the norm $\N(a_p-b_i)$ for some $i\in \{1,\dots,k\}$.
For further use, we only need the finiteness of collection $\{b_1,\dots,b_k\}\subset\overline{\mathbb Q}$ and the latter divisibility, but not a particular choice of $L$ and~$\phi$.

Let $E_\ell$ denote the set of $\overline{\mathbb Q}$-points on the elliptic curve $E$ of order~$\ell$.
The action of the absolute Galois group $\Gal(\overline{\mathbb Q}/\mathbb Q)$ on $E_\ell$ gives a representation $\rho_\ell\colon\Gal(\overline{\mathbb Q}/\mathbb Q)\to\operatorname{Aut}(E_\ell)\cong GL_2(\mathbb Z/\ell\mathbb Z)$.
Since $E$ has no complex multiplication, Serre's open mapping theorem \cite{Se72} guarantees that, for all but finitely many \emph{primes} $\ell$, the image is the whole of $GL_2(\mathbb Z/\ell\mathbb Z)=GL_2(\mathbb F_\ell)$.
We fix any prime $\ell$ of this quality subject to the additional condition that it is relatively prime to all \emph{nonzero rational integers} in the collection $\{b_1,\dots,b_k\}$.

Now choose $X$ to be large and eliminate all primes $p\le X$ for which $a_p=0$; there are just $O(X^{3/4})$ of them, as a consequence of Kaneko's result from~\cite{Ka89} (see \cite{El91} for the details of this deduction).
Then keep only those remaining primes for which $a_p\equiv0\bmod\ell$; the latter condition means that the Frobenius at~$p$ maps  to a matrix in $GL_2(\mathbb F_\ell)$ with the zero trace\,---\,the proportion of such primes is exactly the proportion of zero-trace matrices, hence it is positive (in other words, it forms a Chebotarev set):
\[
\#\{p\le X:a_p\ne0, \,a_p\equiv0\bmod\ell\}\gg X/\log X.
\]
Here the Frobenius at $p$ is viewed as an element of $\Gal(\overline{\mathbb Q}/\mathbb Q)$ (unlike the Frobenius of a particular finite extension featured in the statement of Proposition~\ref{thm:Ro}).

Let $i$ be such that $b_i$ is nonzero and integer. For our primes the equality $a_p=b_i$ cannot happen, because $a_p\equiv0\bmod\ell$ and $(\ell,b_i)=1$ by the choice of~$\ell$.
Thus, the relation
\[
p\mid\N(a_p-b_i)  \;\text{for some}\; i\in\{1,\dots,k\}
\]
is a genuine divisibility relation in that the norm $\N(a_p-b_i)$ is not zero.
Since $p\le X$, we have $|a_p|\le 2\sqrt{X}$, so that there are $O(X^{1/2})$ ways of choosing $a_p$.
Each time this choice is performed, the nonzero integer $\N(a_p-b_i)$ has size $X^{O(1)}$, hence it has at most $O(\log X)$ prime factors. Summing up over all $a_p$ and all possible $i$, the number of primes coming out of these divisibilities is at most
\[
O(\sqrt{X}\log X).
\]
At the same time $\gg X/\log X$ such primes dividing the norms are involved.
The comparison of the sizes gives a contradiction for large~$X$.
\end{proof}

\begin{remark}
\label{anonym}
As has been pointed out to us by the anonymous referee, the na\"\i ve $\cA$-transcen\-dence \cite{AF24} of $\ba=(a_p(E))_{p\in\cP}$ can be resolved by an insignificant modification of our argument in the proof. Namely, assuming that $f(a_p)\equiv0\mod p$ for a certain polynomial $f(x)\in\mathbb Z[x]$, not necessarily irreducible but without multiple roots, we choose the set of all roots of $f(x)$ to form the collection $\{b_1,\dots,b_k\}$.
The rest of the proof proceeds without any further changes.
\end{remark}

Finally, we would like to notice that the $\cA$-transcendental examples in Theorem~\ref{thm:main} are very different in nature from the $q$-Fibonacci ones treated in \cite{AF24,LZ25}.
However, what tended to work in both situations is rough counts on how often the entries repeat on a scale up to $X$ versus how many primes are available. This is what has achieved progress.
One can expect that the argument generalises to other situations.
The elements in $\cA$ attached to Frobenius traces suggest considering the sequences $(f(p))_{p\in\cP}$ for \emph{multiplicative} functions $f\colon\mathbb Z_{>0}\to\mathbb Z$. At the same time, many of those are $\cA$-rational for trivial reasons\,---\,consider $f(n)=\sum_{d\mid n}d^k$ with $k\in\mathbb Z_{\ge0}$ as such an example; the latter choice of $f(n)$ still expects to generate $\cA$-transcendental elements $(f(p+c))_{p\in\cP}$ via a shift by a fixed $c\in\mathbb Z\setminus\{0\}$, losing the multiplicativity though.

\begin{acknowledgements}
We thank Masanobu Kaneko, Pieter Moree, Igor Shparlinski and Yuto Tsuruta for their comments on this note.
We are particularly thankful to the anonymous referee for their useful remarks and the settlement of the na\"\i ve $\cA$-transcendence in Remark~\ref{anonym}.

F.L.\ was partly supported by the 2024 ERC Synergy Project DynAMiCs.
W.Z.\ acknowledges support of the Max Planck Institute for Mathematics (Bonn, Germany) during his stay in April--June 2025.
\end{acknowledgements}


\end{document}